\providecommand{\tabularnewline}{\\}
\numberwithin{equation}{section}
\numberwithin{figure}{section}
\theoremstyle{plain}
\newtheorem{thm}{\protect\theoremname}
\theoremstyle{plain}
\newtheorem{prop}[thm]{\protect\propositionname}
\providecommand{\propositionname}{Proposition}
\providecommand{\theoremname}{Theorem}
\begin{document}
\title{Congruence Properties of Indices of Triangular Numbers Multiple of
Other Triangular Numbers}
\author{Vladimir PLETSER}
\address{European Space Agency (ret.)}
\email{PletserVladimir@gmail.com}
\begin{abstract}
It is known that, for any positive non-square integer multiplier $k$,
there is an infinity of multiples of triangular numbers which are
triangular numbers. We analyze the congruence properties of the indices
$\xi$ of triangular numbers that are multiples of other triangular
numbers. We show that the remainders in the congruence relations of
$\xi$ modulo $k$ come always in pairs whose sum always equal $\left(k-1\right)$,
always include 0 and $\left(k-1\right)$, and only 0 and $\left(k-1\right)$
if $k$ is prime, or an odd power of a prime, or an even square plus
one or an odd square minus one or minus two. If the multiplier $k$
is twice the triangular number of $n$, the set of remainders includes
also $n$ and $\left(n^{2}-1\right)$ and if $k$ has integer factors,
the set of remainders include multiples of a factor following certain
rules. Finally, algebraic expressions are found for remainders in
function of $k$ and its factors. Several exceptions are noticed and
superseding rules exist between various rules and expressions of remainders.
This approach allows to eliminate in numerical searches those $\left(k-\upsilon\right)$
values of $\xi_{i}$ that are known not to provide solutions, where
$\upsilon$ is the even number of remainders. The gain is typically
in the order of $k/\upsilon$, with $\upsilon\ll k$ for large values
of $k$.
\end{abstract}

\keywords{Triangular Numbers, Multiple of Triangular Numbers, Recurrent Relations,
Congruence Properties}

\maketitle
AMS 2010 Mathematics Subject Classification: Primary 11A25; Secondary
11D09

\section{Introduction\label{sec1:Introduction}}

Triangular numbers $T_{t}=\frac{t\left(t+1\right)}{2}$ are one of
the figurate numbers enjoying many properties; see, e.g., \cite{key-1,key-2}
for relations and formulas. Triangular numbers $T_{\xi}$ that are
multiples of other triangular number $T_{t}$ 
\begin{equation}
T_{\xi}=kT_{t}\label{eq:1}
\end{equation}
are investigated. Only solutions for $k>1$ are considered as the
cases $k=0$ and $k=1$ yield respectively $\xi=0$ and $\xi=t,\forall t$.
Accounts of previous attempts to characterize these triangular numbers
multiple of other triangular numbers can be found in \cite{key-3,key-4,key-5,key-6,key-7,key-8,key-9}.
Recently, Pletser showed \cite{key-9} that, for non-square integer
values of $k$, there are infinitely many solutions that can be represented
simply by recurrent relations of the four variables $t,\xi,Tt$ and
$T_{\xi}$, involving a rank $r$ and parameters $\kappa$ and $\gamma$,
which are respectively the sum and the product of the $\left(r-1\right)^{\text{th}}$
and the $r^{\text{th}}$ values of $t$. The rank $r$ is being defined
as the number of successive values of $t$ solutions of (\ref{eq:1})
such that their successive ratios are slowly decreasing without jumps.

In this paper, we present a method based on the congruent properties
of $\xi\left(\text{mod\,}\ensuremath{k}\right)$, searching for expressions
of the remainders in function of $k$ or of its factors. This approach
accelerates the numerical search of the values of $t_{n}$ and $\xi_{n}$
that solve (\ref{eq:1}), as it eliminates values of $\xi$ that are
known not to provide solutions to (\ref{eq:1}). The gain is typically
in the order of $k/\upsilon$ where $\upsilon$ is the number of remainders,
which is usually such that $\upsilon\ll k$. 

\section{Rank and Recurrent Equations\label{sec2:Rank-and-Recurrent} }

Sequences of solutions of (\ref{eq:1}) are known for $k=2,3,5,6,7,8$
and are listed in the Online Encyclopedia of Integer Sequences (OEIS)
\cite{key-10}, with references given in Table \ref{tab1:OEIS--references}.

\begin{table}
\caption{\label{tab1:OEIS--references}OEIS \cite{key-10} references of sequences
of integer solutions of (\ref{eq:1}) for $k=2,3,5,6,7,8$}

\centering{}%
\begin{tabular}{ccccccc}
\hline 
$k$ & 2 & 3 & 5 & 6 & 7 & 8\tabularnewline
\hline 
\hline 
$t$ & A053141 & A061278 & A077259 & A077288 & A077398 & A336623\tabularnewline
\hline 
$\xi$ & A001652 & A001571 & A077262 & A077291 & A077401 & A336625\tabularnewline
\hline 
$T_{t}$ & A075528 & A076139 & A077260 & A077289 & A077399 & A336624\tabularnewline
\hline 
$T_{\xi}$ & A029549 & A076140 & A077261 & A077290 & A077400 & A336626\tabularnewline
\hline 
\end{tabular}
\end{table}

Among all solutions, $t=0$ is always a first solution of (\ref{eq:1})
for all non-square integer value of $k$, yielding $\xi=0$. 

Let's consider the two cases of $k=2$ and $k=7$ yielding the successive
solution pairs as shown in Table \ref{tab2:Solutions-of-}. We indicate
also the ratios $t_{n}/t_{n-1}$ for both cases and $t_{n}/t_{n-2}$
for $k=7$. It is seen that for $k=2$, the ratio $t_{n}/t_{n-1}$
varies between close values, from 7 down to 5.829, while for $k=7$,
the ratio $t_{n}/t_{n-1}$ alternates between values 2.5 ... 2.216
and 7.8 ... 7.23, while the ratio $t_{n}/t_{n-2}$ decreases regularly
from 19.5 to 16.023 (corresponding approximately to the product of
the alternating values of the ratio $t_{n}/t_{n-1}$). We call rank
$r$ the integer value such that $t_{n}/t_{n-r}$ is approximately
constant or, better, decreases regularly without jumps (a more precise
definition is given further). So, here, the case $k=2$ has rank $r=1$
and the case $k=7$ has rank $r=2$.

\begin{table}

\caption{\label{tab2:Solutions-of-}Solutions of (\ref{eq:1}) for $k=2,7$}

\centering{}%
\begin{tabular}{|c|rrl|rrll|}
\hline 
{\small{}$n$} & \multicolumn{3}{c|}{{\small{}$k=2$}} & \multicolumn{4}{c|}{{\small{}$k=7$}}\tabularnewline
\cline{2-8} \cline{3-8} \cline{4-8} \cline{5-8} \cline{6-8} \cline{7-8} \cline{8-8} 
 & \multicolumn{1}{r|}{{\small{}$t_{n}$}} & \multicolumn{1}{r|}{{\small{}$\xi_{n}$}} & {\small{}$\frac{t_{n}}{t_{n-1}}$} & \multicolumn{1}{r|}{{\small{}$t_{n}$}} & \multicolumn{1}{r|}{{\small{}$\xi_{n}$}} & \multicolumn{1}{l|}{{\small{}$\frac{t_{n}}{t_{n-1}}$}} & {\small{}$\frac{t_{n}}{t_{n-2}}$}\tabularnewline
\hline 
\hline 
{\small{}0} & {\small{}0} & {\small{}0} &  & {\small{}0} & {\small{}0} &  & \tabularnewline
\hline 
{\small{}1} & {\small{}2} & {\small{}3} & {\small{}--} & {\small{}2} & 6 & {\small{}--} & {\small{}--}\tabularnewline
\hline 
{\small{}2} & {\small{}14} & {\small{}20} & {\small{}7} & 5 & {\small{}14} & 2.5 & {\small{}--}\tabularnewline
\hline 
{\small{}3} & {\small{}84} & {\small{}119} & {\small{}6} & {\small{}39} & {\small{}104} & 7.8 & 19.5\tabularnewline
\hline 
{\small{}4} & {\small{}492} & {\small{}696} & {\small{}5.857} & {\small{}87} & {\small{}231} & 2.231 & 17.4\tabularnewline
\hline 
{\small{}5} & {\small{}2870} & {\small{}4059} & {\small{}5.833} & {\small{}629} & {\small{}1665} & 7.230 & 16.128\tabularnewline
\hline 
{\small{}6} & {\small{}16730} & {\small{}23660} & {\small{}5.829} & {\small{}1394} & {\small{}3689} & 2.216 & 16.023\tabularnewline
\hline 
\end{tabular}
\end{table}
In \cite{key-9},we showed that the rank $r$ is the index of $t_{r}$
and $\xi_{r}$ solutions of (\ref{eq:1}) such that 

\begin{equation}
\kappa=t_{r}+t_{r-1}=\xi_{r}-\xi_{r-1}-1\label{eq:3.2}
\end{equation}
and that the ratio $t_{2r}/t_{r}$, corrected by the ratio $t_{r-1}/t_{r}$,
is equal to a constant $2\kappa+3$
\begin{equation}
\frac{t_{2r}-t_{r-1}}{t_{r}}=2\kappa+3\label{eq:3-0}
\end{equation}
For example, for $k=7$ and $r=2$, (\ref{eq:3.2}) and (\ref{eq:3-0})
yield respectively, $\kappa=7$ and $2\kappa+3=17$. 

Four recurrent equations for $t_{n},\xi_{n},T_{t_{n}}$ and $T_{\xi_{n}}$
are given in \cite{key-9} for each non-square integer value of $k$ 

\begin{align}
t_{n} & =2\left(\kappa+1\right)t_{n-r}-t_{n-2r}+\kappa\label{eq:3.3}\\
\xi_{n} & =2\left(\kappa+1\right)\xi_{n-r}-\xi_{n-2r}+\kappa\label{eq:3.3-1}\\
T_{t_{n}} & =\left(4\left(\kappa+1\right)^{2}-2\right)T_{t_{n-r}}-T_{t_{n-2r}}+\left(T_{\kappa}-\gamma\right)\label{eq:3.3-2}\\
T_{\xi_{n}} & =\left(4\left(\kappa+1\right)^{2}-2\right)T_{\xi_{n-r}}-T_{\xi_{n-2r}}+k\left(T_{\kappa}-\gamma\right)\label{eq:3.3-3}
\end{align}
where coefficients are functions of two constants $\kappa$ and $\gamma$,
respectively the sum $\kappa$ and the product $\gamma=t_{r-1}t_{r}$
of the first two sequential values of $t_{r}$ and $t_{r-1}$. Note
that the first three relations (\ref{eq:3.3}) to (\ref{eq:3.3-2})
are independent from the value of $k$. 

\section{Congruence of $\xi$ modulo $k$\label{sec3:Congruence-of-}}

We use the following notations: for $A,B,C\in\mathbb{Z},B<C,C>1$,
$A\equiv B\left(\text{mod\,}C\right)$ means that $\exists D\in\mathbb{Z}$
such that $A=DC+B$, where $B$ and $C$ are called respectively the
remainder and the modulus. To search numerically for the values of
$t_{n}$ and $\xi_{n}$ that solve (\ref{eq:1}), one can use the
congruent properties of $\xi\left(\text{mod\,}\ensuremath{k}\right)$
given in the following propositions. In other words, we search in
the following propositions for expressions of the remainders in function
of $k$ or of its factors.
\begin{prop}
\label{prop1:For-,-}For $\forall s,k\in\mathbb{Z}^{+}$, $k$ non-square,
$\exists\xi,\mu,\upsilon,i,j\in\mathbb{Z}^{+}$, such that if $\xi_{i}$
are solutions of (\ref{eq:1}), then for $\xi_{i}\equiv\mu_{j}\left(\text{mod\,}k\right)$
with $1\leq j\leq\upsilon$, the number $\upsilon$ of remainders
is always even, $\upsilon\equiv0\left(\text{mod\,}2\right)$, the
remainders come in pairs whose sum is always equal to $\left(k-1\right)$,
and the sum of all remainders is always equal to the product of $\left(k-1\right)$
and the number of remainder pairs, $\sum_{j=1}^{\upsilon}\mu_{j}=\left(k-1\right)\upsilon/2$.
\end{prop}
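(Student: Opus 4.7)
The plan is to reduce the claim to the divisibility $k \mid \mu(\mu+1)$ forced on every remainder by Equation~(\ref{eq:1}), and then to show that the set of remainders is closed under the involution $\mu \mapsto k-1-\mu$, which has no fixed points.

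First, $T_{\xi_i} = k T_{t_i}$ gives $\xi_i(\xi_i+1) = k t_i(t_i+1)$, so every residue $\mu = \xi_i \bmod k$ lies in
\[
M := \{\mu \in \{0, 1, \ldots, k-1\} : k \mid \mu(\mu+1)\}.
\]
On $M$ the map $\sigma(\mu) := k-1-\mu$ is a well-defined involution, because $(k-1-\mu)(k-\mu) \equiv (-1-\mu)(-\mu) = \mu(\mu+1) \equiv 0 \pmod{k}$. It has no fixed point: $\sigma(\mu)=\mu$ forces $2\mu = k-1$, impossible for even $k$; for odd $k$ it would give $\mu = (k-1)/2$ and then $k \mid (k^2-1)/4$, contradicting $\gcd(k,4) = 1$ and $k>1$. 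Every $\sigma$-orbit in $M$ therefore has size $2$ and sums to $k-1$.

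The principal remaining step is to show that the actual set of remainders is $\sigma$-closed, i.e.\ that whenever a positive solution has residue $\mu$, some positive solution attains residue $k-1-\mu$. I would pass to the Pell-type form $(2\xi+1)^2 - k(2t+1)^2 = 1 - k$ obtained by completing the squares in $\xi(\xi+1) = k t(t+1)$. Setting $X = 2\xi+1$, $Y = 2t+1$, the sign change $X \mapsto -X$ preserves the equation and sends $\xi$ to $-\xi-1$, whose reduction is $\sigma(\mu)$. This produces a negative-$X$ solution, but because $k$ is non-square the Pell equation $u^2 - kv^2 = 1$ admits a fundamental unit $(u,v)$, and multiplying $(-X + Y\sqrt{k})$ by $(u + v\sqrt{k})^n$ with $n$ an even integer chosen large enough restores positivity of both coordinates. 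Writing $(u+v\sqrt{k})^n = A_n + B_n\sqrt{k}$, one has $A_n \equiv u^n \pmod{k}$ and $u^2 \equiv 1 \pmod{k}$, so the new positive solution $(X',Y')$ satisfies $X' = -X A_n + k Y B_n \equiv -X \pmod{k}$, which translates to $\xi' \equiv k-1-\mu \pmod{k}$. The main technical obstacle I foresee is in lifting this congruence from $\bmod\,k$ to $\bmod\,2k$ when $k$ is even, since $\xi' \bmod k$ is determined by $X' \bmod 2k$; parity information forced by $u^2 - kv^2 = 1$ should resolve it.

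Combining both steps, the set of remainders is partitioned by $\sigma$ into $\upsilon/2$ pairs each summing to $k-1$, giving $\upsilon \equiv 0 \pmod{2}$ and $\sum_{j=1}^{\upsilon} \mu_j = (k-1)\upsilon/2$ as asserted.
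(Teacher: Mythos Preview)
Your argument is correct and in fact more complete than the paper's. The paper's proof only verifies the identity $T_{\alpha}\equiv T_{k-1-\alpha}\pmod{k}$, i.e.\ that the involution $\sigma:\mu\mapsto k-1-\mu$ preserves the divisibility condition defining your set $M$; it neither checks that $\sigma$ is fixed-point free nor, more importantly, that the set of residues \emph{actually attained} by solutions $\xi_i$ is closed under $\sigma$. That the second point is a genuine issue and not a formality is visible already at $k=30$: there $|M|=8$, yet the paper's own Table~\ref{tab5:Values-of-} records only four attained residues $\{0,5,24,29\}$. You identify both gaps and close them --- the fixed-point exclusion by your short parity/divisibility argument, and $\sigma$-closure of the attained residue set by your Pell-equation step passing through $X^2-kY^2=1-k$ and the sign flip $X\mapsto -X$.

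The technical obstacle you flag for even $k$ resolves exactly as you anticipate. Writing $(u+v\sqrt{k})^{2}=A_2+B_2\sqrt{k}$ one has $A_2=2u^{2}-1$ and $B_2=2uv$; from $u^{2}-1=kv^{2}$ it follows that $A_2-1=2kv^{2}$, so $A_2\equiv 1\pmod{2k}$, while $B_2$ is even, so $kYB_2\equiv 0\pmod{2k}$. Iterating, $A_{2n}\equiv 1\pmod{2k}$ and $B_{2n}$ is even for every $n\ge 1$, whence $X'=-XA_{2n}+kYB_{2n}\equiv -X\pmod{2k}$, which is precisely the lift you need to conclude $\xi'\equiv k-1-\mu\pmod{k}$. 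The same parity information ($A_{2n}$ odd, $B_{2n}$ even) keeps $X'$ and $Y'$ odd, so the resulting $(\xi',t')$ are genuine non-negative integer solutions once $n$ is taken large enough to force positivity.
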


\begin{proof}
Let $s,i,j,k,\xi,\mu,\upsilon,\alpha,\beta\in\mathbb{Z}^{+}$, $k$
non-square, and $\xi_{i}$ solutions of (\ref{eq:1}). Rewriting (\ref{eq:1})
as $T_{t_{i}}=T_{\xi_{i}}/k$, for $T_{t_{i}}$ to be integer, $k$
must divide exactly $T_{\xi_{i}}=\xi_{i}\left(\xi_{i}+1\right)/2$,
i.e., among all possibilities, $k$ divides either $\xi_{i}$ or $\left(\xi_{i}+1\right)$,
yielding two possible solutions $\xi_{i}\equiv0\left(\text{mod\,}k\right)$
or $\xi_{i}\equiv-1\left(\text{mod}\,k\right)$, i.e. $\upsilon=2$
and the set of $\mu_{j}$ includes $\left\{ 0,\left(k-1\right)\right\} $.
This means that $\xi_{i}$ are always congruent to either $0$ or
$\left(k-1\right)$ modulo $k$ for all non-square values of $k$.

Furthermore, if some $\xi_{i}$ are congruent to $\alpha$ modulo
$k$, then other $\xi_{i}$ are also congruent to $\beta$ modulo
$k$ with $\beta=\left(k-\alpha-1\right)$. As $\xi_{i}\equiv\alpha\left(\text{mod}\,k\right)$,
then $\xi_{i}\left(\xi_{i}+1\right)/2\equiv\left(\alpha\left(\alpha+1\right)/2\right)\left(\text{mod\,}k\right)$
and replacing $\alpha$ by $\alpha=\left(k-\beta-1\right)$ yields
$\left(\alpha\left(\alpha+1\right)/2\right)=\left(\left(k-\beta-1\right)\left(k-\beta\right)/2\right)$,
giving $\xi_{i}\left(\xi_{i}+1\right)/2\equiv\left(\left(k-\beta-1\right)\left(k-\beta\right)/2\right)\left(\text{mod\,}k\right)\equiv$ 

$\left(\beta\left(\beta+1\right)/2\right)\left(\text{mod\,}k\right)$.
In this case, $\upsilon=4$ and the set of $\mu_{j}$ includes, but
not necessarily limits to, $\left\{ 0,\alpha,\left(k-\alpha-1\right),\left(k-1\right)\right\} $.
\end{proof}
Note that in some cases, $\upsilon>4$, as for $k=66,70,78,105,...$
, $\nu=8$. However, in some other cases, $\upsilon=2$ only and the
set of $\mu_{j}$ contains only $\left\{ 0,\left(k-1\right)\right\} $,
as shown in the next proposition. In this proposition, several rules
(R) are given constraining the congruence characteristics of $\xi_{i}$.
\begin{prop}
\label{prop2:For-,-}For $\forall s,k,\alpha,n\in\mathbb{Z}^{+}$,
$k$ non-square, $\alpha>1$, $\exists\xi,\mu,\upsilon,i\in\mathbb{Z}^{+}$,
such that if $\xi_{i}$ are solutions of (\ref{eq:1}), then $\xi_{i}$
are always only congruent to $0$ and $\left(k-1\right)$ modulo $k$
, and $\upsilon=2$ if either (R1) $k$ is prime, or (R2) $k=\alpha^{n}$
with $\alpha$ prime and $n$ odd, or (R3) $k=s^{2}+1$ with $s$
even, or (R4) $k=s^{\prime2}-1$ or (R5) $k=s^{\prime2}-2$ with $s^{\prime}$
odd.
\end{prop}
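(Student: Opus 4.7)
The plan is to separate the five rules into two groups by technique: (R1) and (R2), where $k$ is a prime power, admit purely elementary divisibility arguments, whereas (R3)--(R5), where $k$ has a specific algebraic form, are handled by recasting (\ref{eq:1}) via the substitution $X=2\xi+1$, $Y=2t+1$ as the Pell-type equation $X^{2}-kY^{2}=1-k$.

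For (R1) and (R2) the starting point, already used in the proof of Proposition \ref{prop1:For-,-}, is $k\mid\xi(\xi+1)/2$. For (R1) with $k$ an odd prime, $\gcd(k,2)=1$ upgrades this to $k\mid\xi(\xi+1)$, and primality together with $\gcd(\xi,\xi+1)=1$ forces $k\mid\xi$ or $k\mid\xi+1$; the case $k=2$ is immediate since only two residues exist modulo $2$. For (R2) with $k=p^{n}$ and $n$ odd, the same coprimality forces the full prime power to sit inside one of $\xi,\xi+1$: for $p$ odd directly from $p^{n}\mid\xi(\xi+1)$, and for $p=2$ from $2^{n+1}\mid\xi(\xi+1)$ together with the fact that exactly one of two consecutive integers is even, which must therefore absorb the whole factor $2^{n+1}$, giving $\xi\equiv 0$ or $2^{n}-1\pmod{2^{n}}$.

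For (R3)--(R5), since $1-k\equiv 1\pmod k$ one gets $X^{2}\equiv 1\pmod k$ for free, so the task reduces to showing that the only square roots of $1$ modulo $k$ realized by genuine Pell solutions with $X,Y$ odd are $\pm 1$. For (R3) with $k=s^{2}+1$ and $s$ even I would use the Gaussian factorization $k=(s+i)(s-i)$: rewriting the Pell equation as $X^{2}+s^{2}=kY^{2}$ gives $(X+si)(X-si)=(s+i)(s-i)Y^{2}$, and the parity hypotheses force $\gcd(X,Y)=1$ and hence $\gcd_{\mathbb Z[i]}(X+si,X-si)=1$; unique factorization in $\mathbb Z[i]$ then writes $X+si=u(s\pm i)\alpha^{2}$ for a Gaussian unit $u$ and some $\alpha=a+bi\in\mathbb Z[i]$, and imposing the constraint $\mathrm{Im}(u(s\pm i)\alpha^{2})=s$ together with the reduction modulo $k$ collapses the formula for $X$ to $\pm 1\pmod k$ in every unit branch. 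For (R4) and (R5) I would aim to describe all solutions $(X,Y)$ with $X,Y$ odd as lying in the two orbits of the trivial solutions $(\pm 1,1)$ under the fundamental unit $\epsilon=s'+\sqrt k$ of $X^{2}-kY^{2}=1$, and verify by induction on the orbit index, using $2s'^{2}-1\equiv 1\pmod{2k}$, that $X\equiv\pm 1\pmod{2k}$ along these orbits; the factorizations $k=(s'-1)(s'+1)$ in (R4) and $k=(s'-\sqrt 2)(s'+\sqrt 2)$ in $\mathbb Z[\sqrt 2]$ in (R5) then enter in showing that there are no further fundamental orbits of odd solutions.

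The main obstacle is the passage from $X^{2}\equiv 1\pmod k$ to $X\equiv\pm 1\pmod k$ when $k$ is composite: if $k$ has two or more distinct odd prime factors then the Chinese Remainder Theorem produces extra square roots of unity modulo $k$, and these must be eliminated at the level of the Pell orbit rather than by the congruence alone. The algebraic factorizations of $k$ used above are what make this elimination possible, by forcing each prime of the auxiliary ring lying above a rational prime divisor of $k$ to be distributed coherently between the conjugate factors on the left-hand side of the Pell identity. Ruling out ``mixed'' distributions --- showing that they cannot produce the prescribed imaginary or radical part $s$, $s'$ or $s'\sqrt 2$ --- is the delicate step, and the values of $k$ for which mixed distributions do realize an admissible solution are precisely the exceptions to (R3)--(R5) referred to in the abstract.
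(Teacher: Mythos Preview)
Your treatment of (R1) and (R2) is essentially the paper's argument, written out in more detail; the paper simply observes that a prime power $k$ can divide the product $\xi(\xi+1)$ only through one of the two coprime factors.

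For (R3)--(R5) your route diverges sharply from the paper's. The paper does not work with the Pell form $X^{2}-kY^{2}=1-k$ at all. Instead it invokes an external result (reference \cite{key-11}) that for each of these three families the rank is $r=2$, and then writes down the two fundamental solution pairs $(t_{1},\xi_{1})$, $(t_{2},\xi_{2})$ in closed form --- e.g.\ for (R3) one has $\xi_{1}=(s^{2}+1)(s-1)$ and $\xi_{2}=(s^{2}+1)(s+1)-1$, visibly congruent to $0$ and $k-1$ modulo $k=s^{2}+1$. The recurrence (\ref{eq:3.3-1}) then generates every further $\xi_{n}$ from these, so the congruence constraint propagates. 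This is far more elementary than Gaussian or $\mathbb{Z}[\sqrt{2}]$ factorization: once the fundamental solutions are exhibited and the rank is known, there is nothing left to prove about square roots of unity modulo composite $k$.

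Your plan, by contrast, leaves its hardest step open. You correctly identify that when $k$ is composite the congruence $X^{2}\equiv 1\pmod{k}$ has extra roots, and you propose to kill them via unique factorization in $\mathbb{Z}[i]$ (for R3) or via an orbit classification under the fundamental unit (for R4, R5). But the ``delicate step'' of ruling out mixed prime distributions is only asserted, not carried out; in particular, for (R3) with $k$ like $65=5\cdot 13$ or $145=5\cdot 29$, the Gaussian prime factors of $k$ can be split between $X+si$ and $X-si$ in several ways, and you have not shown that only the unmixed splittings produce an imaginary part equal to $s$. Your closing sentence even anticipates that some $k$ will survive as genuine exceptions to (R3)--(R5) --- but Proposition~\ref{prop2:For-,-} asserts there are none, and the ``exceptions'' mentioned in the abstract concern the later expression tables (Tables~\ref{tab5:Values-of-} and \ref{tab6:Rules-and-Expressions}), not these five rules. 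So as it stands your argument for (R3)--(R5) is a plausible research programme rather than a proof, and it is both longer and less conclusive than the paper's direct computation of the fundamental solutions.
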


\begin{proof}
Let $s,s^{\prime},k,\alpha>1,n,i,\xi,\mu,\upsilon\in\mathbb{Z}^{+}$,
$k$ non-square, and $\xi_{i}$ are solutions of (\ref{eq:1}). 

(R1)+(R2): If $k$ is prime or if $k=\alpha^{n}$ (with $\alpha$
prime and $n$ odd as $k$ is non-square), then, in both cases, $k$
can only divide either $\xi_{i}$ or $\left(\xi_{i}+1\right)$, yielding
the two congruences $\xi_{i}\equiv0\left(\text{mod\,}k\right)$ and
$\xi_{i}\equiv-1\left(\text{mod\,}k\right)$.

(R3): If $k=s^{2}+1$ with $s$ even, the rank $r$ is always $r=2$
\cite{key-11}, and the only two sets of solutions are 
\begin{align}
\left(t_{1},\xi_{1}\right) & =\left(s\left(s-1\right),\left(s^{2}+1\right)\left(s-1\right)\right)\label{eq:2.8}\\
\left(t_{2},\xi_{2}\right) & =\left(s\left(s+1\right),\left(s^{2}+1\right)\left(s+1\right)-1\right)\label{eq:2.9}
\end{align}
as can be easily shown. For $t_{1}$, forming
\begin{align*}
kT_{t_{1}} & =\frac{1}{2}\left(s^{2}+1\right)\left(s\left(s-1\right)\right)\left(s\left(s-1\right)+1\right)\\
 & =\frac{1}{2}\left[\left(s^{2}+1\right)\left(s-1\right)\right]\left[\left(s^{2}+1\right)\left(s-1\right)+1\right]=T_{\xi_{1}}
\end{align*}
which is the triangular number of $\xi_{1}$. One obtains similarly
$\xi_{2}$ from $t_{2}$. These two relations (\ref{eq:2.8}) and
(\ref{eq:2.9}) show respectively that $\xi_{1}$ is congruent to
$0$ modulo $k$ and $\xi_{2}$ is congruent to $\left(k-1\right)$
modulo $k$.

(R4) For $k=s^{\prime2}-1$ with $s^{\prime}$ odd, the rank $r=2$
\cite{key-11}, and the only two sets of solutions are
\begin{align}
\left(t_{1},\xi_{1}\right) & =\left(\left(s^{\prime}-1\right)s^{\prime}-1,\left(s^{\prime2}-1\right)\left(s^{\prime}-1\right)-1\right)\label{eq:2.13}\\
\left(t_{2},\xi_{2}\right) & =\left(\left(s^{\prime}-1\right)\left(s^{\prime}+2\right)+1,\left(s^{\prime2}-1\right)\left(s^{\prime}+1\right)\right)\label{eq:2.14}
\end{align}
as can be easily demonstrated as above. These two relations (\ref{eq:2.13})
and (\ref{eq:2.14}) show that $\xi_{1}$ and $\xi_{2}$ are congruent
respectively to $\left(k-1\right)$ and $0$ modulo $k$.

(R5) For $k=s^{\prime2}-2$ with $s^{\prime}$ odd, the rank $r=2$
\cite{key-11}, and the only two sets of solutions are
\begin{align}
\left(t_{1},\xi_{1}\right) & =\left(\frac{1}{2}\left(s^{\prime}-2\right)\left(s^{\prime}+1\right),\frac{1}{2}\left(s^{\prime2}-2\right)\left(s^{\prime}-1\right)-1\right)\label{eq:2.15}\\
\left(t_{2},\xi_{2}\right) & =\left(\frac{s^{\prime}}{2}\left(s^{\prime}+1\right)-1,\frac{1}{2}\left(s^{\prime2}-2\right)\left(s^{\prime}+1\right)\right)\label{eq.2.16}
\end{align}
as can easily be shown as above. These two relations (\ref{eq:2.15})
and (\ref{eq.2.16}) show that $\xi_{1}$ and $\xi_{2}$ are congruent
respectively to $\left(k-1\right)$ and $0$ modulo $k$.
\end{proof}
There are other cases of interest as shown in the next two Propositions
\begin{prop}
\label{prop3:For-,-,}For $\forall n\in\mathbb{Z}^{+}$, $\exists k,\xi,\mu<k,i,j\in\mathbb{Z}^{+}$,
$k$ non-square, such that if $\xi_{i}$ are solutions of (\ref{eq:1})
with $\xi_{i}\equiv\mu_{j}\left(\text{mod}\,k\right)$, and (R6) if
$k$ is twice a triangular number $k=n\left(n+1\right)=2T_{n}$, then
the set of $\mu_{j}$ includes $\left\{ 0,n,\left(n^{2}-1\right),\left(k-1\right)\right\} $,
with $1\leq j\leq\upsilon$.
\end{prop}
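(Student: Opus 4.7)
The plan is to exhibit two explicit solutions of (\ref{eq:1}) for $k = n(n+1) = 2T_{n}$ whose residues modulo $k$ are exactly $n$ and $n^{2}-1$. Combined with Proposition \ref{prop1:For-,-}, which already forces $\{0,k-1\}\subseteq\{\mu_{j}\}$, this will immediately give the claimed inclusion $\{0,n,n^{2}-1,k-1\}\subseteq\{\mu_{j}\}$.

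To find these solutions I would exploit the factorization $k=n(n+1)$ together with the requirement, arising from (\ref{eq:1}), that $2k=2n(n+1)$ must divide the product $\xi(\xi+1)$ of the two coprime consecutive integers $\xi$ and $\xi+1$. The two natural candidates are obtained by arranging $\xi+1$ to be a perfect square built from $n$ or from $n+1$: take $\xi=(n-1)(n+1)=n^{2}-1$ so that $\xi+1=n^{2}$, or take $\xi=n(n+2)$ so that $\xi+1=(n+1)^{2}$. A short algebraic verification will then show that the pairs $(t,\xi)=(n-1,\,n^{2}-1)$ and $(t,\xi)=(n+1,\,n(n+2))$ both satisfy $T_{\xi}=kT_{t}$; indeed the first yields $T_{\xi}=n^{2}(n-1)(n+1)/2=n(n+1)T_{n-1}$ and the second yields $T_{\xi}=n(n+2)(n+1)^{2}/2=n(n+1)T_{n+1}$.

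A direct reduction modulo $k$ then finishes the argument. Since $0\le n^{2}-1<n^{2}+n=k$, the first solution gives $\xi\equiv n^{2}-1\pmod{k}$; for the second, $n(n+2)=n^{2}+2n=k+n$, so $\xi\equiv n\pmod{k}$. The consistency check $n+(n^{2}-1)=n^{2}+n-1=k-1$ confirms that the two new residues form the $(k-1)$-pair demanded by Proposition \ref{prop1:For-,-}, and the trivial solution $\xi=0$ together with the always-present $k-1$ supplies the remaining pair.

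The only real obstacle is spotting the correct candidates for $\xi$. Once the factorization $k=n(n+1)$ is paired with the coprimality of $\xi$ and $\xi+1$, the two candidates above are essentially forced by the requirement that the factors $n$, $n+1$, and an extra $2$ split across the pair $\{\xi,\xi+1\}$ to make $\xi(\xi+1)/(2k)$ a triangular number; after that the verification is routine. (The degenerate case $n=1$ gives $n=n^{2}-1=1$, so the four-element set collapses to $\{0,1\}$, still consistent with the statement, which only asserts inclusion.)
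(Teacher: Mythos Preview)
Your argument is correct and, in fact, more complete than the paper's. The paper's proof simply observes that the necessary divisibility condition $\mu(\mu+1)\equiv 0\pmod{n(n+1)}$ is satisfied when $\mu=n$ or $\mu=n^{2}-1$; it never checks that these residues actually arise from genuine solutions of (\ref{eq:1}). You instead produce the explicit solution pairs $(t,\xi)=(n-1,\,n^{2}-1)$ and $(t,\xi)=(n+1,\,n(n+2))$ and verify $T_{\xi}=kT_{t}$ in each case, so that the residues $n^{2}-1$ and $n$ are demonstrably attained. This is a stronger statement and a cleaner justification of the proposition as written.

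One small slip: in your parenthetical about $n=1$ you write ``$n=n^{2}-1=1$'', but for $n=1$ one has $n^{2}-1=0$, not $1$. Your conclusion that the set collapses to $\{0,1\}$ is nonetheless correct, since $\{0,n,n^{2}-1,k-1\}=\{0,1,0,1\}=\{0,1\}$.
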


\begin{proof}
Let $n,k,\xi,\mu<k,i,j\in\mathbb{Z}^{+}$, $k$ non-square, and $\xi_{i}$
solutions of (\ref{eq:1}). Let $\xi_{i}\equiv\mu_{j}\left(\text{mod\,}k\right)$
with $1\leq j\leq\upsilon$. As the ratio $\xi_{i}\left(\xi_{i}+1\right)/k$
must be integer, $\xi_{i}\left(\xi_{i}+1\right)\equiv0\left(\text{mod\,}k\right)$
or $\mu_{j}\left(\mu_{j}+1\right)\equiv0\left(\text{mod}\,n\left(n+1\right)\right)$
which is obviously satisfied if $\mu_{j}=n$ or $\mu_{j}=\left(n^{2}-1\right)$.
\end{proof}
Finally, this last proposition gives a general expression of the congruence
$\xi_{i}\left(\text{mod\,}\ensuremath{k}\right)$ for most cases to
find the remainders $\mu_{j}$ other than $0$ and $\left(k-1\right)$.
\begin{prop}
\label{prop4:For-,-,}For $\forall n>1\in\mathbb{Z}^{+}$, $\exists k,f,\xi,\nu<n<k,\mu<k,m<n,i,j\in\mathbb{Z}^{+}$,
$k$ non-square, let $\xi_{i}$ be solutions of (\ref{eq:1}) with
$\xi_{i}\equiv\mu_{j}\left(\text{mod}\,k\right)$, let $f$ be a factor
of $k$ such that $f=k/n$ with $f\equiv\nu\left(\text{mod\,}\ensuremath{n}\right)$
and $k\equiv\nu n\left(\text{mod}\,n^{2}\right)$, then the set of
$\mu_{j}$ includes either $\left\{ 0,mf,\left(\left(n-m\right)f-1\right),\left(k-1\right)\right\} $
or $\left\{ 0,\left(mf-1\right),\left(n-m\right)f,\left(k-1\right)\right\} $,
where $m$ is an integer multiplier of $f$ in the congruence relation
and such that $m<n/2$ or $m<\left(n+1\right)/2$ for $n$ being even
or odd respectively, and $1\leq j\leq\upsilon$.
\end{prop}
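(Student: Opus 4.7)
The plan is to exploit the same necessary condition used in Proposition 1: from (\ref{eq:1}), $\xi_i(\xi_i+1) = k\,t_i(t_i+1)$, so reducing modulo $k$ forces $\mu_j(\mu_j+1)\equiv 0\pmod{k}$ for every remainder $\mu_j$. This congruence depends only on the residue $\mu_j$, hence the search for remainders reduces to classifying solutions of $\mu(\mu+1)\equiv 0\pmod{k}$ with $0\le\mu<k$. The algebraic content of Proposition \ref{prop4:For-,-,} is then to test natural ansätze built from the factorization $k=nf$.

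Writing $f=\nu+qn$ from $f\equiv\nu\pmod n$ immediately yields $k=n\nu+qn^{2}$, which establishes the auxiliary identity $k\equiv\nu n\pmod{n^{2}}$. I would then substitute the two ansätze $\mu=mf$ and $\mu=mf-1$ for $1\le m<n$. For $\mu=mf$ the condition $\mu(\mu+1)\equiv 0\pmod{nf}$ factors a common $f$ and collapses to $m(mf+1)\equiv 0\pmod n$; using $f\equiv\nu\pmod n$ this is $m(m\nu+1)\equiv 0\pmod n$. The parallel substitution $\mu=mf-1$ yields $m(m\nu-1)\equiv 0\pmod n$. Whenever either reduced congruence admits a solution $m$ in the prescribed range, the corresponding $\mu$ is a legitimate remainder.

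Each such $\mu$ is then paired by Proposition \ref{prop1:For-,-}: the partner of $mf$ summing to $k-1$ is $(k-1)-mf=(n-m)f-1$, and the partner of $mf-1$ is $(n-m)f$. Together with the always-present pair $\{0,k-1\}$ from Proposition \ref{prop1:For-,-}, this produces the two candidate quadruples $\{0,mf,(n-m)f-1,k-1\}$ and $\{0,mf-1,(n-m)f,k-1\}$ claimed in the proposition. The range restriction $m<n/2$ for $n$ even and $m<(n+1)/2$ for $n$ odd only removes the double count arising from the symmetry $m\leftrightarrow n-m$, which by the pairing just permutes the same set.

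The main obstacle is conceptual rather than computational: the proposition asserts \emph{inclusion} of these remainders in $\{\mu_j\}$, not that every factor $f$ of $k$ automatically forces one of the two reduced congruences to be solvable. The proof therefore must be phrased as a conditional characterization — any $m$ satisfying $m(m\nu\pm 1)\equiv 0\pmod n$ generates the indicated pair, and two such pairs together with $\{0,k-1\}$ give the stated set — leaving the existence of $m$ to be verified per instance (consistent with the ``several exceptions'' noted in the abstract). A secondary point to handle carefully is that $k\mid\mu(\mu+1)$ is only necessary a priori; that each such $\mu$ is actually realized by some $\xi_i$ in (\ref{eq:1}) follows from the recurrent equations (\ref{eq:3.3})--(\ref{eq:3.3-1}), which propagate any admissible initial residue into an infinite family of solutions.
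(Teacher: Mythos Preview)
Your argument mirrors the paper's proof exactly: reduce to $\mu(\mu+1)\equiv 0\pmod{nf}$, test the ans\"atze $\mu=mf$ and $\mu=mf-1$ to obtain the conditions $m(m\nu\pm1)\equiv 0\pmod n$, then invoke Proposition~\ref{prop1:For-,-} to recover the partners $(n-m)f-1$ and $(n-m)f$. Your closing claim that realization of each admissible $\mu$ by an actual $\xi_i$ follows from the recurrences (\ref{eq:3.3})--(\ref{eq:3.3-1}) goes beyond what the paper establishes---the paper, like your main argument, treats this as a per-instance verification, which is why exceptions and superseding rules appear after Table~\ref{tab5:Values-of-}.
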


\begin{proof}
Let $n>1,k,f,\xi,\mu<k,m<n,i,j<n<k\in\mathbb{Z}^{+}$, $k$ non-square,
and $\xi_{i}$ a solution of (\ref{eq:1}). Let $\xi_{i}\equiv\mu_{j}\left(\text{mod\,}k\right)$
with $1\leq j\leq\upsilon$. As the ratio $\xi_{i}\left(\xi_{i}+1\right)/k$
must be integer, $\xi_{i}\left(\xi_{i}+1\right)\equiv0\left(\text{mod\,}k\right)$
or $\mu_{j}\left(\mu_{k}+1\right)\equiv0\left(\text{mod\,}fn\right)$.
For a proper choice of the factor $f$ of $k$, let $\mu_{j}$ be
a multiple of $f$, $\mu_{j}=mf$, then $m\left(mf+1\right)\equiv0\left(\text{mod\,}n\right)$.
As $f\equiv\nu\left(\text{mod\,}\ensuremath{n}\right)$, one has 
\begin{equation}
m\left(m\nu+1\right)\equiv0\left(\text{mod}\,n\right)\label{eq:120}
\end{equation}
. Let now $\left(\mu_{j}+1\right)$ be a multiple of $f$, $\mu_{j}+1=mf$,
then $m\left(mf-1\right)\equiv0\left(\text{mod\,}n\right)$ or
\begin{equation}
m\left(m\nu-1\right)\equiv0\left(\text{mod\,}n\right)\label{eq:121}
\end{equation}

An appropriate combination of integer parameters $m$ and $\nu$ guarantees
that (\ref{eq:120}) and (\ref{eq:121}) are satisfied. Proposition
1 yields the other remainder value as $mf+\left(n-m\right)f-1=k-1$
and $\left(mf-1\right)+\left(n-m\right)f=k-1$.
\end{proof}
The appropriate combinations of integer parameters $m$ and $\nu$
are given in Table \ref{tab3:Combination-of-parameters} for $2\leq n\leq12$.
The sign $-$ in subscript corresponds to the remainder $\left(mf-1\right)$;
the sign $/$ indicates an absence of combination.

\begin{table}
\caption{\label{tab3:Combination-of-parameters}Combination of parameters $m$
and $\nu$ for $2\protect\leq n\protect\leq12$}

\centering{}%
\begin{tabular}{|c|c|ccccccccccc|}
\cline{3-13} \cline{4-13} \cline{5-13} \cline{6-13} \cline{7-13} \cline{8-13} \cline{9-13} \cline{10-13} \cline{11-13} \cline{12-13} \cline{13-13} 
\multicolumn{1}{c}{$m$} &  & \multicolumn{11}{c|}{$\nu$}\tabularnewline
\cline{3-13} \cline{4-13} \cline{5-13} \cline{6-13} \cline{7-13} \cline{8-13} \cline{9-13} \cline{10-13} \cline{11-13} \cline{12-13} \cline{13-13} 
\multicolumn{1}{c}{} & $\searrow$ & \multicolumn{1}{c|}{1} & \multicolumn{1}{c|}{2} & \multicolumn{1}{c|}{3} & \multicolumn{1}{c|}{4} & \multicolumn{1}{c|}{5} & \multicolumn{1}{c|}{6} & \multicolumn{1}{c|}{7} & \multicolumn{1}{c|}{8} & \multicolumn{1}{c|}{9} & \multicolumn{1}{c|}{10} & 11\tabularnewline
\hline 
\multirow{11}{*}{$n$} & 2 & 1\_ &  &  &  &  &  &  &  &  &  & \tabularnewline
\cline{2-4} \cline{3-4} \cline{4-4} 
 & 3 & 1\_ & 1 &  &  &  &  &  &  &  &  & \tabularnewline
\cline{2-5} \cline{3-5} \cline{4-5} \cline{5-5} 
 & 4 & 1\_ & / & 1 &  &  &  &  &  &  &  & \tabularnewline
\cline{2-6} \cline{3-6} \cline{4-6} \cline{5-6} \cline{6-6} 
 & 5 & 1\_ & 2 & 2\_ & 1 &  &  &  &  &  &  & \tabularnewline
\cline{2-7} \cline{3-7} \cline{4-7} \cline{5-7} \cline{6-7} \cline{7-7} 
 & 6 & 1\_ & / & / & / & 1 &  &  &  &  &  & \tabularnewline
\cline{2-8} \cline{3-8} \cline{4-8} \cline{5-8} \cline{6-8} \cline{7-8} \cline{8-8} 
 & 7 & 1\_ & 3 & 2 & 2\_ & 3\_ & 1 &  &  &  &  & \tabularnewline
\cline{2-9} \cline{3-9} \cline{4-9} \cline{5-9} \cline{6-9} \cline{7-9} \cline{8-9} \cline{9-9} 
 & 8 & 1\_ & / & 3\_ & / & 3 & / & 1 &  &  &  & \tabularnewline
\cline{2-10} \cline{3-10} \cline{4-10} \cline{5-10} \cline{6-10} \cline{7-10} \cline{8-10} \cline{9-10} \cline{10-10} 
 & 9 & 1\_ & 4 & / & 2 & 2\_ & / & 4\_ & 1 &  &  & \tabularnewline
\cline{2-11} \cline{3-11} \cline{4-11} \cline{5-11} \cline{6-11} \cline{7-11} \cline{8-11} \cline{9-11} \cline{10-11} \cline{11-11} 
 & 10 & 1\_ & / & 3 & / & 5\_ & / & 3\_ & / & 1 &  & \tabularnewline
\cline{2-12} \cline{3-12} \cline{4-12} \cline{5-12} \cline{6-12} \cline{7-12} \cline{8-12} \cline{9-12} \cline{10-12} \cline{11-12} \cline{12-12} 
 & 11 & 1\_ & 5 & 4\_ & 3\_ & 2 & 2\_ & 3 & 4 & 5\_ & 1 & \tabularnewline
\cline{2-13} \cline{3-13} \cline{4-13} \cline{5-13} \cline{6-13} \cline{7-13} \cline{8-13} \cline{9-13} \cline{10-13} \cline{11-13} \cline{12-13} \cline{13-13} 
 & 12 & 1\_ & / & / & / & 3 & / & 4\_ & / & / & / & 1\tabularnewline
\hline 
\end{tabular}
\end{table}

One deduces from Table \ref{tab3:Combination-of-parameters} the following
simple rules:

1) $\forall n\in\mathbb{Z}^{+}$, only those values of $\nu$ that
are co-prime with $n$ must be kept, all other combinations (indicated
by $/$ in Table \ref{tab3:Combination-of-parameters}) must be discarded
as they correspond to combinations with smaller values of $n$ and
$\nu$; for $n$ even, this means that all even values of $\nu$ must
be discarded. For example, $\nu=2$ and $n=4$ are not co-prime and
their combination obviously corresponds to $\nu=1$ and $n=2$.

2) For $\nu=1$ and $\nu=n-1$, all values of $m$ are $m=1$ with
respectively the remainders $\left(mf-1\right)$ and $mf$.

3) For $\forall n,i\in\mathbb{Z}^{+}$, $n$ odd, $2\leq i\leq\left(n-1\right)/2$,
and for $\nu=\left(n-\left(2i-3\right)\right)/2$ and $\nu=\left(n+\left(2i-3\right)\right)/2$,
all the values of $m$ are $m=i$.

4) For $\forall n\in\mathbb{Z}^{+}$, $n$ odd, and for $\nu=2$ and
$\nu=n-2$, the remainders are respectively $mf$ and $\left(mf-1\right)$.

5) For $\forall n,i\in\mathbb{Z}^{+}$, $n$ even, $2\leq i\leq n/2$,
and for $\nu=\left(n-\left(2i-3\right)\right)/2$ and $\nu=\left(n+\left(2i-3\right)\right)/2$,
all the values of $m$ are $m=i$.

Expressions of $\mu_{i}$ are given in Table \ref{tab4:Expressions-of-}
for $2\leq n\leq12$ (with codes E$n\nu$). For example, for $k\equiv12\nu\left(\text{mod}\,12^{2}\right)$
and $\nu=5$ (code E125), i.e. $k=60,204,348,...$, $\xi_{i}\equiv\mu_{j}\left(\text{mod\,}k\right)$
with the set of remainders $\mu_{j}$ including $\left\{ 0,mf,\left(\left(n-m\right)f-1\right),\left(k-1\right)\right\} $
with $m=3$ (see Table \ref{tab3:Combination-of-parameters}) and
$f=k/12=5,17,29...$respectively.

\begin{table}
\caption{\label{tab4:Expressions-of-}Expressions of $\mu_{j}$ for $2\protect\leq n\protect\leq12$}

\centering{}%
\begin{tabular}{|c|c|c|c|c|c|c|}
\hline 
$n$ & $\nu$ & $m$ & $k\equiv$ & $f$ & $\mu_{j}$ & Code\tabularnewline
\hline 
\hline 
2 & 1 & 1 & $2\left(\text{mod\,}4\right)$ & $k/2$ & $0,(k/2)-1,k/2,k-1$ & E21\tabularnewline
\hline 
3 & 1 & 1 & $3\left(\text{mod\,}9\right)$ & $k/3$ & $0,\left(k/3\right)-1,2k/3,k-1$ & E31\tabularnewline
 & 2 & 1 & $6\left(\text{mod}\,9\right)$ &  & $0,k/3,\left(2k/3\right)-1,k-1$ & E32\tabularnewline
\hline 
4 & 1 & 1 & $4\left(\text{mod\,}16\right)$ & $k/4$ & $0,\left(k/4\right)-1,3k/4,k-1$ & E41\tabularnewline
 & 3 & 1 & $12\left(\text{mod\,}16\right)$ &  & $0,k/4,\left(3k/4\right)-1,k-1$ & E43\tabularnewline
\hline 
5 & 1 & 1 & $5\left(\text{mod\,}25\right)$ & $k/5$ & $0,\left(k/5\right)-1,4k/5,k-1$ & E51\tabularnewline
 & 2 & 2 & $10\left(\text{mod\,}25\right)$ &  & $0,2k/5,\left(3k/5\right)-1,k-1$ & E52\tabularnewline
 & 3 & 2 & $15\left(\text{mod\,}25\right)$ &  & $0,\left(2k/5\right)-1,3k/5,k-1$ & E53\tabularnewline
 & 4 & 1 & $20\left(\text{mod\,}25\right)$ &  & $0,k/5,\left(4k/5\right)-1,k-1$ & E54\tabularnewline
\hline 
6 & 1 & 1 & $6\left(\text{mod\,}36\right)$ & $k/6$ & $0,\left(k/6\right)-1,5k/6,k-1$ & E61\tabularnewline
 & 5 & 1 & $30\left(\text{mod\,}36\right)$ &  & $0,k/6,\left(5k/6\right)-1,k-1$ & E65\tabularnewline
\hline 
7 & 1 & 1 & $7\left(\text{mod\,}49\right)$ & $k/7$ & $0,\left(k/7\right)-1,6k/7,k-1$ & E71\tabularnewline
 & 2 & 2 & $14\left(\text{mod\,}49\right)$ &  & $0,3k/7,\left(4k/7\right)-1,k-1$ & E72\tabularnewline
 & 3 & 3 & $21\left(\text{mod\,}49\right)$ &  & $0,2k/7,\left(5k/7\right)-1,k-1$ & E73\tabularnewline
 & 4 & 3 & $28\left(\text{mod\,}49\right)$ &  & $0,\left(2k/7\right)-1,5k/7,k-1$ & E74\tabularnewline
 & 5 & 2 & $35\left(\text{mod\,}49\right)$ &  & $0,\left(3k/7\right)-1,4k/7,k-1$ & E75\tabularnewline
 & 6 & 1 & $42\left(\text{mod\,}49\right)$ &  & $0,k/7,\left(6k/7\right)-1,k-1$ & E76\tabularnewline
\hline 
8 & 1 & 1 & $8\left(\text{mod\,}64\right)$ & $k/8$ & $0,\left(k/8\right)-1,7k/8,k-1$ & E81\tabularnewline
 & 3 & 3 & $24\left(\text{mod\,}64\right)$ &  & $0,\left(3k/8\right)-1,5k/8,k-1$ & E83\tabularnewline
 & 5 & 3 & $40\left(\text{mod\,}64\right)$ &  & $0,3k/8,\left(5k/8\right)-1,k-1$ & E85\tabularnewline
 & 7 & 1 & $56\left(\text{mod\,}64\right)$ &  & $0,k/8,\left(7k/8\right)-1,k-1$ & E87\tabularnewline
\hline 
9 & 1 & 1 & $9\left(\text{mod\,}81\right)$ & $k/9$ & $0,(k/9)-1,8k/9,k-1$ & E91\tabularnewline
 & 2 & 4 & $18\left(\text{mod\,}81\right)$ &  & $0,4k/9,(5k/9)-1,k-1$ & E92\tabularnewline
 & 4 & 2 & $36\left(\text{mod\,}81\right)$ &  & $0,2k/9,(7k/9)-1,k-1$ & E94\tabularnewline
 & 5 & 2 & $45\left(\text{mod\,}81\right)$ &  & $0,(2k/9)-1,7k/9,k-1$ & E95\tabularnewline
 & 7 & 4 & $63\left(\text{mod\,}81\right)$ &  & $0,(4k/9)-1,5k/9,k-1$ & E97\tabularnewline
 & 8 & 1 & $72\left(\text{mod\,}81\right)$ &  & $0,k/9,(8k/9)-1,k-1$ & E98\tabularnewline
\hline 
10 & 1 & 1 & $10\left(\text{mod\,}100\right)$ & $k/10$ & $0,(k/10)-1,9k/10,k-1$ & E101\tabularnewline
 & 3 & 3 & $30\left(\text{mod\,}100\right)$ &  & $0,3k/10,(7k/10)-1,k-1$ & E103\tabularnewline
 & 7 & 3 & $70\left(\text{mod\,}100\right)$ &  & $0,(3k/10)-1,7k/10,k-1$ & E107\tabularnewline
 & 9 & 1 & $90\left(\text{mod}\,100\right)$ &  & $0,k/10,(9k/10)-1,k-1$ & E109\tabularnewline
\hline 
11 & 1 & 1 & $11\left(\text{mod\,}121\right)$ & $k/11$ & $0,(k/11)-1,10k/11,k-1$ & E111\tabularnewline
 & 2 & 5 & $22\left(\text{mod\,}121\right)$ &  & $0,5k/11,(6k/11)-1,k-1$ & E112\tabularnewline
 & 3 & 4 & $33\left(\text{mod\,}121\right)$ &  & $0,(4k/11)-1,7k/11,k-1$ & E113\tabularnewline
 & 4 & 3 & $44\left(\text{mod\,}121\right)$ &  & $0,(3k/11)-1,8k/11,k-1$ & E114\tabularnewline
 & 5 & 2 & $55\left(\text{mod\,}121\right)$ &  & $0,2k/11,(9k/11)-1,k-1$ & E115\tabularnewline
 & 6 & 2 & $66\left(\text{mod\,}121\right)$ &  & $0,(2k/11)-1,9k/11,k-1$ & E116\tabularnewline
 & 7 & 3 & $77\left(\text{mod\,}121\right)$ &  & $0,3k/11,(8k/11)-1,k-1$ & E117\tabularnewline
 & 8 & 4 & $88\left(\text{mod\,}121\right)$ &  & $0,4k/11,(7k/11)-1,k-1$ & E118\tabularnewline
 & 9 & 5 & $99\left(\text{mod\,}121\right)$ &  & $0,(5k/11)-1,6k/11,k-1$ & E119\tabularnewline
 & 10 & 1 & $110\left(\text{mod\,}121\right)$ &  & $0,k/11,(10k/11)-1,k-1$ & E1110\tabularnewline
\hline 
12 & 1 & 1 & $12\left(\text{mod\,}144\right)$ & $k/12$ & $0,(k/12)-1,11k/12,k-1$ & E121\tabularnewline
 & 5 & 3 & $60\left(\text{mod\,}144\right)$ &  & $0,3k/12,(9k/12)-1,k-1$ & E125\tabularnewline
 & 7 & 4 & $84\left(\text{mod\,}144\right)$ &  & $0,(4k/12)-1,8k/12,k-1$ & E127\tabularnewline
 & 11 & 1 & $132\left(\text{mod\,}144\right)$ &  & $0,k/12,(11k/12)-1,k-1$ & E1211\tabularnewline
\hline 
\end{tabular}
\end{table}

Values of the remainders $\mu_{j}$ are given in Table \ref{tab5:Values-of-}
for $2\leq k\leq120$, with rule (R) and expression (E) codes as references.
R and E codes separated by comas imply that all references apply simultaneously
to the case; E codes separated by + mean that all expressions are
applicable to the case; some expression references are sometimes missing.
One observes that in two cases (for $k=74$ and 104), expressions
could not be found (indicated by question marks).

\begin{table}
\caption{\label{tab5:Values-of-}Values of $\mu_{j}$ for $2\protect\leq k\protect\leq120$}

\centering{}%
\begin{tabular}{|c|c|c||c|c|c|}
\hline 
$k$ & $\mu_{j}$ & References & $k$ & $\mu_{j}$ & References\tabularnewline
\hline 
\hline 
2 & 0,1 & R1,R6,E21 & 63 & 0,27,35,62 & E72,E97\tabularnewline
\hline 
3 & 0,2 & R1,E31 & 65 & 0,64 & R3\tabularnewline
\hline 
5 & 0,4 & R1,R3,E51 & 66 & 0,11,21,32,33,44,54,65 & E21+E31+E65+E116\tabularnewline
\hline 
6 & 0,2,3,5 & R6,E21,E32,E61 & 67 & 0,66 & R1\tabularnewline
\hline 
7 & 0,6 & R1,R5,E71 & 68 & 0,16,51,67 & E41\tabularnewline
\hline 
8 & 0,7 & R2,R4,E81 & 69 & 0,23,45,68 & E32\tabularnewline
\hline 
10 & 0,4,5,9 & E21,E52,E101 & 70 & 0,14,20,34,35,49,55,69 & E21+E54+E73+E107\tabularnewline
\hline 
11 & 0,10 & R1,E111 & 71 & 0,70 & R1\tabularnewline
\hline 
12 & 0,3,8,11 & R6,E31,E43,E121 & 72 & 0,8,63,71 & R6,E81,E98\tabularnewline
\hline 
13 & 0,12 & R1 & 73 & 0,72 & R1\tabularnewline
\hline 
14 & 0,6,7,13 & E21,E72 & 74 & 0,73 & ? \tabularnewline
\hline 
15 & 0,5,9,14 & E32,E53 & 75 & 0,24,50,74 & E31\tabularnewline
\hline 
17 & 0,16 & R1,R3 & 76 & 0,19,56,75 & E43\tabularnewline
\hline 
18 & 0,8,9,17 & E21,E92 & 77 & 0,21,55,76 & E74,E117\tabularnewline
\hline 
19 & 0,18 & R1 & 78 & 0,12,26,38,39,51,65,77 & E21+E32+E61\tabularnewline
\hline 
20 & 0,4,15,19 & R6,E41,E54 & 79 & 0,78 & R1,R5\tabularnewline
\hline 
21 & 0,6,14,20 & E31,E73 & 80 & 0,79 & R4\tabularnewline
\hline 
22 & 0,10,11,21 & E21,E112 & 82 & 0,40,41,81 & E21\tabularnewline
\hline 
23 & 0,22 & R1,R5 & 83 & 0,82 & R1\tabularnewline
\hline 
24 & 0,23 & R4 & 84 & 0,27,56,83 & E31,E127\tabularnewline
\hline 
26 & 0,12,13,25 & E21 & 85 & 0,34,50,84 & E52\tabularnewline
\hline 
27 & 0,26 & R2 & 86 & 0,42,43,85 & E21\tabularnewline
\hline 
28 & 0,7,20,27 & E43,E74 & 87 & 0,29,57,86 & E32\tabularnewline
\hline 
29 & 0,28 & R1 & 88 & 0,32,55,87 & E83,E118\tabularnewline
\hline 
30 & 0,5,24,29 & R6,E51,E65 & 89 & 0,88 & R1\tabularnewline
\hline 
31 & 0,30 & R1 & 90 & 0,9,80,89 & R6,E91,E109\tabularnewline
\hline 
32 & 0,31 & R2 & 91 & 0,13,77,90 & E75\tabularnewline
\hline 
33 & 0,11,21,32 & E32,E113 & 92 & 0,23,68,91 & E43\tabularnewline
\hline 
34 & 0,16,17,33 & E21 & 93 & 0,30,62,92 & E31\tabularnewline
\hline 
35 & 0,14,20,34 & E52,E75 & 94 & 0,46,47,93 & E21\tabularnewline
\hline 
37 & 0,36 & R1,R3 & 95 & 0,19,75,94 & E54\tabularnewline
\hline 
38 & 0,18,19,37 & E21 & 96 & 0,32,63,95 & E32\tabularnewline
\hline 
39 & 0,12,26,38 & E31 & 97 & 0,96 & R1\tabularnewline
\hline 
40 & 0,15,24,39 & E53,E85 & 98 & 0,48,49,97 & E21\tabularnewline
\hline 
41 & 0,40 & R1 & 99 & 0,44,54,98 & E92,E119\tabularnewline
\hline 
42 & 0,6,35,41 & R6,E61,E76 & 101 & 0,100 & R1,R3\tabularnewline
\hline 
43 & 0,42 & R1 & 102 & 0,50,51,102 & E21\tabularnewline
\hline 
44 & 0,11,32,43 & E43,E114 & 103 & 0,102 & R1\tabularnewline
\hline 
45 & 0,9,35,44 & E54,E95 & 104 & 0,103 & ?\tabularnewline
\hline 
46 & 0,22,23,245 & E21 & 105 & 0,14,20,35,69,84,90,104 & E32+E51+E71\tabularnewline
\hline 
47 & 0,46 & R1,R5 & 106 & 0,52,53,105 & E21\tabularnewline
\hline 
48 & 0,47 & R4 & 107 & 0,106 & R1\tabularnewline
\hline 
50 & 0,24,25,49 & E21 & 108 & 0,27,80,107 & E43\tabularnewline
\hline 
51 & 0,17,33,50 & E32 & 109 & 0,108 & R1\tabularnewline
\hline 
52 & 0,12,39,51 & E41 & 110 & 0,10,99,109 & R6,E101,E1110\tabularnewline
\hline 
53 & 0,52 & R1 & 111 & 0,36,74,110 & E31\tabularnewline
\hline 
54 & 0,26,27,53 & E21 & 112 & 0,48,63,111 & E72\tabularnewline
\hline 
55 & 0,10,44,54 & E51,E115 & 113 & 0,112 & R1\tabularnewline
\hline 
56 & 0,7,48,55 & R6,E71,E87 & 114 & 0,56,57,113 & E21\tabularnewline
\hline 
57 & 0,18,38,56 & E31 & 115 & 0,45,69,114 & E53\tabularnewline
\hline 
58 & 0,28,29,57 & E21 & 116 & 0,28,87,115 & E41\tabularnewline
\hline 
59 & 0,58 & R1 & 117 & 0,26,90,116 & E94\tabularnewline
\hline 
60 & 0,15,44,59 & E43,E125 & 118 & 0,58,59,117 & E21\tabularnewline
\hline 
61 & 0,60 & R1 & 119 & 0,118 & R1,R5 \tabularnewline
\hline 
62 & 0,30,31,61 & E21 & 120 & 0,15,104,119 & E87\tabularnewline
\hline 
\end{tabular}
\end{table}

This Table \ref{tab5:Values-of-} gives correctly the values of the
remainder pairs in most of the cases. There are although some exceptions
and some values missing.

Among the exceptions to the values given in Table \ref{tab5:Values-of-},
for $n=2$, remainders values for $k=30,42,74,90,110,\ldots$ are
different from the theoretical ones in Table \ref{tab4:Expressions-of-}.
Furthermore, for $k=66,70,78,105,...$, additional remainders exist.
Expressions are missing for $k=74$ (E21) and 104 (E85). Finally,
one observes also that for 16 cases, some Rules or Expressions supersede
some other Expressions (indicated by Ra > Exy or Exy > Ezt), as reported
in Table \ref{tab6:Rules-and-Expressions}. For example, Rule 6 supersedes
Expression 21 (R6 > E21) for $k=30,42,90,110$, i.e., $k=2T_{5},2T_{6},2T_{9},2T_{10},...$
and more generally for all $k=2T_{i}$ for $i\equiv1,2\left(\text{mod}4\right)$.

\begin{table}
\caption{\label{tab6:Rules-and-Expressions}Rules and Expressions superseding
other Rules and Expressions}

\centering{}%
\begin{tabular}{cl}
\hline 
$k$ & \tabularnewline
\hline 
\hline 
24 & R4 > E32; R4 > E83\tabularnewline
\hline 
30 & R6 > E21; R6 > E31; R6 > E103; E51 > E103; E65 > E103\tabularnewline
\hline 
42 & R6 > E21; R6 > E32\tabularnewline
\hline 
48 & R4 > E31\tabularnewline
\hline 
56 & R6 > E43\tabularnewline
\hline 
60 & E43 > E32; E43 > E52\tabularnewline
\hline 
65 & R3 > E53\tabularnewline
\hline 
72 & R6 > E43\tabularnewline
\hline 
80 & R4 > E51\tabularnewline
\hline 
84 & E31 > E41; E31 > E75\tabularnewline
\hline 
90 & R6 > E21; R6 > E53\tabularnewline
\hline 
102 & E21 > E31; E21 > E65\tabularnewline
\hline 
110 & R6 > E21; R6 > E52\tabularnewline
\hline 
114 & E21 > E32; E21 > E61\tabularnewline
\hline 
119 & R1 > E73; R5 > E73\tabularnewline
\hline 
120 & E87 > R4; E87 > E31; E87 > E54\tabularnewline
\hline 
\end{tabular}
\end{table}

Note that 11 of these 16 values of $k$ are multiple of 6, the others
are 2 mod 6 and 5 mod 6 for, respectively three and two cases. One
notices as well, that generally, Ra and Exy supersede Ezt with $x<z$
and $t<y$, except for $k=60$ and $120$.

\section{Conclusions\label{sec4:Conclusions}}

We have shown that, for indices $\xi$ of triangular numbers multiples
of other triangular numbers, the remainders in the congruence relations
of $\xi$ modulo $k$ come always in pairs whose sum always equal
$\left(k-1\right)$, always include 0 and $\left(k-1\right)$, and
only 0 and $\left(k-1\right)$ if $k$ is prime, or an odd power of
a prime, or an even square plus one or an odd square minus one or
minus two. If the multiplier $k$ is twice a triangular number of
$n,$the set of remainders includes also $n$ and $\left(n^{2}-1\right)$
and if $k$ has integer factors, the set of remainders include multiple
of a factor following certain rules. Finally, algebraic expressions
are found for remainders in function of $k$ and its factors. Several
exceptions are noticed as well and it appears that there are superseding
rules between the various rules and expressions. 

This approach allows to eliminate in numerical searches those $\left(k-\upsilon\right)$
values of $\xi_{i}$ that are known not to provide solutions of (\ref{eq:1}),
where $\upsilon$ is the even number of remainders. The gain is typically
in the order of $k/\upsilon$, with $\upsilon\ll k$ for large values
of $k$.

\end{document}